\documentclass{rse}
\usepackage{amsmath, amsthm}
\title{ VECTOR FIELDS ON RIGHT GENERALIZED COMPLEX PROJECTIVE STIEFEL MANIDFOLDS.}
\author{{Shilpa Gondhali, B. Subhash}}
\newcommand{\ncom}{\newcommand}
\newtheorem{thm}{Theorem}[section]

\ncom{\eop}{{\hfill$\spadesuit$}}

\begin{document}
\maketitle
\begin{abstract}
The question of paralleizability and stable parallelizability of a family of manifolds obtained as a quotients of circle
action on the complex Stiefel manifolds are studied and settled.
\end{abstract}
\section{Introduction}
Consider $\mathbb{C}^n$ equipped with the standard Hermitian product. The space of all orthonormal 
$k$-frames in $\mathbb{C}^n$ is called the complex Stiefel manifolds and is denoted by $W_{n,k}$. We can identify
$W_{n,k}$ with the homogeneous space $U(n)/ U(n-k)$ where $U(n)$ denote the group of unitary transformations 
of $\mathbb{C}^n$ and $U(n-k)$ is imbedded in $U(n)$ as the subgroup that fixes the first $k$ standard basis vectors
$e_1, \ldots, e_k \in \mathbb{C}^n$.\\

\noindent
Let $S^1$ denote the circle group, $\{z\in \mathbb{C} \mid |z| =1\}.$ Consider the action of $S^1$ 
on $W_{n,k}$ given by $z \cdot (v_1,\ldots,v_k)=(z\cdot v_1,\ldots,z \cdot v_k),$ where $(v_1, \ldots, v_k) \in W_{n,k}$ 
and $z \in S^1 .$ The quotient space of $W_{n,k}$ under this action of $S^1$ is called projective Stiefel manifold, 
usually denoted by 
$PW_{n,k}.$ The projective Stiefel manifold, $PW_{n,k}$ can also be realised as the homogeneous space 
$U(n)/(S^1 \times U(n-k))$ and is well studied [\ref{agmp1}], [\ref{agmp2}].\\

\noindent
This action of $S^1$ on $W_{n,k}$ can be generalised as follows: Let $(v_1,\ldots,v_k) \in W_{n,k}$ and $z \in S^1$. 
For any $n$-tuple of integers $(p_1,\ldots,p_n)$, there is an $S^1$ action on $W_{n,k}$ given by 
$z \cdot (v_1,\ldots, v_k) =  (\text{diag}(z^{p_1},\ldots,z^{p_n})v_1,\ldots, \text{diag}(z^{p_1},\ldots,z^{p_n})v_k)$ 
where $\text{diag}(z^{p_1},\ldots,z^{p_n})$ is the diagonal matrix with entries $z^{p_1},\ldots, z^{p_n}$. 
This action is free if and only if every collection of $k$ of these integers is relatively prime. 
In that case the quotient space is a manifold. It is known as the generalized projective Stiefel manifold and is 
denoted by $M_k(p_1,\ldots, p_n).$ Notice that $M_k(p_1,\ldots ,p_n)$ can be identified with
 $S^1\backslash U(n)/U(n-k)$ and it is well known that the space is non-homogeneous in general (For detailed discussion
one can read [\ref{bgm}]). The question of paralleizability for these manifolds 
has been studied and settled in [\ref{am03}].\\
 
\noindent
We consider another generalisation of the action of $S^1$ on $W_{n,k}$ which is described as follows: 
$$ z\cdot (v_1,v_2, \ldots, v_k) = (z^{l_1}v_1,z^{l_2}v_2, \ldots,z^{l_k}v_k)$$ 
for $z \in S^1$ and $l_1, l_2, \ldots, l_k \in \mathbb{Z}$. Note that this action is the same as multiplication from the 
right by the $ k \times k $ diagonal matrix with diagonal entries $z^{l_1},z^{l_2}, \ldots ,z^{l_k}.$
Given ${\bf {\ell}} := (l_1,l_2, \ldots, l_k),$ let $W_{n,k;\bf{\ell}}$ 
denote the orbit space $W_{n,k}/S^1$. We can realise $W_{n,k;\bf{\ell}}$ as the homogeneous space $U(n)/H\times U(n-k)$,
where $H$ is the group $S^1$ embedded in $U(k)$ as $\mbox{diag}(z^{l_1},z^{l_2},\ldots, z^{l_k}), ~~z \in S^1.$
Observe that if $\text{g.c.d.}(l_1,l_2,\ldots,l_k) =1,$ then $W_{n,k;\bf{\ell}}$ is a manifold and will be referred to as 
{\it{right generalized complex projective Stiefel manifold}}. Note that when 
$l_i =1 ~\forall ~ i ,~~W_{n,k;\bf{\ell}}$ is the complex projective Stiefel manifold.
\noindent
In this paper we study the paralleizability and the span of this manifold $W_{n,k;\bf{\ell}}$. 
Throughout the paper $\cong$ is used to denote isomorphism of real bundles.
In \S \ref{tangentbundle}, we obtain an expression for the tangent bundle of $W_{n,k;\bf{\ell}}$. 
In \S \ref{span}, 
we settle the question of paralleizability and stable paralleizability with the following theorem

\begin{thm}
 Let $n\geq 2$, if $k= n,~~ n-1,$ then the manifolds $W_{n,k;\bf{\ell}}$ is parallelizable, 
except $W_{2,1;\bf{\ell}}$ which is stably parallelizable. The manifolds $W_{n,k;\bf{\ell}} $ is not 
stably parallelizable otherwise.
\end{thm}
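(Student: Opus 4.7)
I would begin by applying the tangent bundle formula from \S\ref{tangentbundle}: if $\eta_i$ denotes the complex line bundle on $W_{n,k;\bf{\ell}}$ associated to the character $z\mapsto z^{l_i}$ of $H$ and $\xi$ the canonical complex $(n-k)$-plane bundle, so that $\bigoplus_{i=1}^{k}\eta_i\oplus\xi\cong n\epsilon_{\mathbb C}^1$, then
\[
TW_{n,k;\bf{\ell}}\oplus\epsilon \;\cong\; k\epsilon\;\oplus\;\bigoplus_{1\le i<j\le k}(\eta_i\otimes\eta_j^*)_{\mathbb R}\;\oplus\;\bigoplus_{i=1}^{k}(\eta_i\otimes\xi^*)_{\mathbb R}.
\]
For $k=n$ the last summand is absent ($\xi=0$); for $k=n-1$ the bundle $\xi$ is a complex line, and relabeling $\eta_n:=\xi$ unifies the two cases as $TW\oplus\epsilon\cong k\epsilon\oplus\bigoplus_{1\le i<j\le n}(\eta_i\otimes\eta_j^*)_{\mathbb R}$ together with $\bigoplus_{i=1}^{n}\eta_i\cong n\epsilon_{\mathbb C}^1$.

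Stable parallelizability for $k\in\{n,n-1\}$ then follows by algebra: tensoring $\bigoplus_i\eta_i\cong n\epsilon_{\mathbb C}^1$ with its dual gives $\bigoplus_{i,j}\eta_i\otimes\eta_j^*\cong n^2\epsilon_{\mathbb C}^1$; removing the $n$ diagonal trivial terms yields $\bigoplus_{i\neq j}\eta_i\otimes\eta_j^*\cong(n^2-n)\epsilon_{\mathbb C}^1$ as complex bundles. Taking realifications and using $(\eta_j\otimes\eta_i^*)_{\mathbb R}\cong(\eta_i\otimes\eta_j^*)_{\mathbb R}$ (complex conjugation) gives $2\bigoplus_{i<j}(\eta_i\otimes\eta_j^*)_{\mathbb R}\cong 2(n^2-n)\epsilon$, and substituting into the unified formula shows $TW_{n,k;\bf{\ell}}$ is stably trivial.

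To upgrade this to genuine parallelizability I would present $W_{n,k;\bf{\ell}}$ as a Lie-group quotient and descend a frame. For $k=n$ with $\sum l_i\neq 0$, the map $SU(n)\hookrightarrow U(n)\to U(n)/H$ is a diffeomorphism onto $SU(n)/F$ with $F:=SU(n)\cap H$ finite cyclic; right-invariant vector fields on $SU(n)$ are automatically $F$-invariant and descend to a global frame. For $\sum l_i=0$, $H\subset SU(n)$, and one uses the covering $SU(n)\times S^1\to U(n)$ to realize $W_{n,n;\bf{\ell}}\cong\bigl((SU(n)/H)\times S^1\bigr)/\mu_n$; the stable parallelizability of $SU(n)/H$ (from the standard homogeneous-space formula with $H$ abelian) promotes to parallelizability of $(SU(n)/H)\times S^1$, and the $\mu_n$-quotient inherits the trivialization because $\mu_n$ is central. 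For $k=n-1$ one has $W_{n,n-1;\bf{\ell}}\cong SU(n)/S^1_{\bf{\ell}}$ for a specific circle $S^1_{\bf{\ell}}\subset SU(n)$; here the isotropy is not finite, so the trivialization must be assembled by combining stable parallelizability with an explicit nowhere-vanishing section obtained from the vanishing Euler characteristic ($\mathrm{rank}\,U(n)\neq\mathrm{rank}(H\times U(1))$ for $n\ge 3$). The sole exception $W_{2,1;\bf{\ell}}\cong\mathbb{CP}^1\cong S^2$ is stably parallelizable from the formula but has $\chi=2$, hence is not parallelizable. I expect this upgrade step, especially in the $k=n-1$ case, to be the main technical obstacle, since the absence of a presentation as a finite quotient of a Lie subgroup of $U(n)$ forces the trivialization to be built from the bundle identities themselves rather than inherited from group theory.

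For the remaining range $k\le n-2$, I would show $TW_{n,k;\bf{\ell}}$ is not even stably trivial by exhibiting a nonzero characteristic class. Writing everything in terms of $t:=c_1(\zeta)$ (where $\zeta$ is the tautological line bundle of the $S^1$-bundle $W_{n,k}\to W_{n,k;\bf{\ell}}$, so that $\eta_i\cong\zeta^{l_i}$) and the Chern classes of $\xi$ (determined by $\prod_{i=1}^k(1+l_it)\cdot c(\xi)=1$), one obtains explicit polynomial expressions for the total Stiefel--Whitney and Pontryagin classes. The Serre spectral sequence of the principal $S^1$-bundle $W_{n,k}\to W_{n,k;\bf{\ell}}$, combined with the classical cohomology ring of $W_{n,k}$, yields a presentation of $H^*(W_{n,k;\bf{\ell}};\mathbb Z/2)$ in which one can detect a nonvanishing $w_{2j}(TW_{n,k;\bf{\ell}})$ in the appropriate range. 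The base sub-case $k=1$ reduces to $W_{n,1;\bf{\ell}}\cong\mathbb{CP}^{n-1}$, for which non-stable parallelizability when $n\ge 3$ is classical (from $w(\mathbb{CP}^{n-1})=(1+a)^n$).
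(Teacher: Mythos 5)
Your overall skeleton (tangent bundle formula, split into $k\ge n-1$ versus $k\le n-2$) matches the paper, but two of your key steps have genuine gaps.

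First, your stable parallelizability argument for $k=n,\,n-1$ proves too little. From $\bigoplus_{i\ne j}\eta_i\otimes\eta_j^*\cong (n^2-n)\varepsilon_{\mathbb{C}}$ and the conjugation isomorphism $(\eta_j\otimes\eta_i^*)_{\mathbb{R}}\cong(\eta_i\otimes\eta_j^*)_{\mathbb{R}}$ you obtain only that \emph{twice} the bundle $\alpha:=\bigoplus_{i<j}(\eta_i\otimes\eta_j^*)_{\mathbb{R}}$ is stably trivial, and $2[\alpha]=0$ in $\widetilde{KO}$ does not imply $[\alpha]=0$: $\widetilde{KO}$ of these spaces can have $2$-torsion (already $\widetilde{KO}(S^2)=\mathbb{Z}/2$, so for $W_{2,1;\bf{\ell}}=S^2$ your computation carries no information). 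The paper avoids this by invoking that the full flag manifold $Flag_{\mathbb{C}}(1,\dots,1)=U(n)/T$ is stably parallelizable (a homogeneous-space fact: $\tau(G/H)\oplus(G\times_H\mathfrak{h})$ is trivial and $\mathrm{Ad}|_{\mathfrak{h}}$ is trivial for abelian $H$), so that $q^*\tau Flag$ is stably trivial and $\tau W_{n,k;\bf{\ell}}\cong q^*\tau Flag\oplus(k-1)\varepsilon_{\mathbb{R}}$ is stably trivial outright. Your subsequent machinery for upgrading to genuine parallelizability (descending frames from $SU(n)$, central $\mu_n$-quotients, etc.) is also far heavier than needed: once stable triviality is known, the trivial summand $(k-1)\varepsilon_{\mathbb{R}}$ gives a nowhere-zero vector field for $k\ge 2$, and a closed connected stably parallelizable manifold of positive span is parallelizable; the only case without such a summand is $k=1$, $n=2$, i.e.\ $S^2$, which is the stated exception.

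Second, for $k\le n-2$ your plan to detect non-stable-parallelizability through Stiefel--Whitney classes cannot work in general, because all the relevant mod $2$ classes may vanish. Your own base case shows this: $w(\mathbb{CP}^{n-1})=(1+a)^n$ is identically $1$ whenever $n$ is a power of $2$ (e.g.\ $\mathbb{CP}^3$ has $w=1$ yet is not stably parallelizable). The obstruction must be taken integrally. The paper does exactly this: from the relation of Lemma \ref{tangent} one computes
\[
p_1(\tau W_{n,k;\bf{\ell}})=\Big((n-k)\sum_{1\le i\le k}l_i^2+\big(\sum_{1\le i\le k}l_i\big)^2\Big)\,c_1(\xi_{n,k;\bf{\ell}})^2,
\]
and the coefficient is a strictly positive integer (since $n>k$ and not all $l_i$ vanish), while the Gysin sequence of $W_{n,k}\to W_{n,k;\bf{\ell}}$ shows $c_1^2$ generates $H^4(W_{n,k;\bf{\ell}})\cong\mathbb{Z}$ for $k<n-1$. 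Hence $p_1\ne 0$ and the manifold is not stably parallelizable. Your proposal gestures at Pontrjagin classes but then commits to detecting a nonzero $w_{2j}$, which is the step that fails.
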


%%%%%%%%%%%%%%%%%%%%%%%%%%%%%%%%%%%%%%%%%%%%%%%%%%%%%%%%%%%%%%%%%%%%%%%%%%%%%%%%%%%%%%%%%%%%%%%%%%%%%%%%%%%%%%%%%%%%%%%%%%
\section{Tangent Bundle}\label{tangentbundle}
\noindent
We know that $\pi _{\bf{\ell}} :W_{n,k} \longrightarrow W_{n,k;\bf{\ell}}$ is a principal fibre bundle with fibre 
and structure group $S^1.$ 
Let $\xi_{n,k;\bf{\ell}}$ denote the associated complex line bundle. 
We have a natural projection map 
\[
 q : W_{n,k;\bf{\ell}} \longrightarrow Flag_{\mathbb{C}}(1,\ldots,1, n-k)
\] 
defined by 
$q([v_1,v_2,\ldots, v_k]) = (\mathbb{C} v_1,\mathbb{C} v_2, \ldots, \mathbb{C} v_k,\langle v_1, v_2 ,\ldots, v_k \rangle ^\perp)$ 
where
the orthogonal complement is taken with respect to the standard Hermitian product on 
$\mathbb{C}^n$. 
The tangent bundle of the complex flag manifold $Flag_{\mathbb{C}} (n_1,n_2,\ldots, n_s) $ %\cite{l75} 
is given by
\[
\tau Flag_{\mathbb{C}}(n_1,n_2,\ldots,n_s) \cong_{\mathbb{C}} \displaystyle{\bigoplus_{1\leq i < j\leq s }\xi_i^\vee \otimes_{\mathbb{C}} \xi_j},
\]
where $\cong_{\mathbb{C}}$ denotes isomorphism as real vector bundles (See [\ref{l75}] for details). 
Here $\xi_i $ is the complex vector bundle over the complex flag manifold, 
$Flag_{\mathbb{C}}(n_1,n_2,\ldots,n_s)$ whose 
fibre at a point $V = (V_1,V_2,\ldots,V_s)$ is $V_i$ for all $i = 1,2, \ldots, s$.
Hence we have 
\begin{equation}\label{pull-back}
\tau W_{n,k;\bf{\ell}} \cong q^* (\tau Flag_{\mathbb{C}} (1,1,\ldots,1,n-k)) \oplus \nu ,
\end{equation}
where $\nu$ denotes the real vector bundle tangential to the 
fibres of $q.$ Since, $q$ is a principal bundle with fibre and structure group $(S^1)^k/S^1,$ 
we have $\nu$ is trivial of rank $k-1.$ It follows that
\begin{equation}
 \tau W_{n,k;\bf{\ell}} \cong q^* (\tau Flag_{\mathbb{C}} (1,1,\ldots,1,n-k))\oplus (k-1) \varepsilon _{\mathbb{R}}.
\end{equation}
Observe that $q^*(\xi _i) \cong \xi_{n,k;\bf{\ell}}^{l_i},$ for $1 \leq i \leq k.$ 
Denote $\beta_{n,k;\bf{\ell}} := q^*(\xi_{k+1}),$ we have,
\begin{eqnarray*}
 \tau W_{n,k;\bf{\ell}} & \cong & q^*\big(\displaystyle{\bigoplus_{1\leq i < j \leq k+1} \xi_i^\vee 
                                    \otimes_{\mathbb{C}} \xi_j\big)} \oplus (k-1) 
                                    \varepsilon _{\mathbb{R}},\\
                                 & \cong &  \displaystyle{\bigoplus_{1\leq i< j \leq k+1}} q^*(\xi_i^\vee 
                                   \otimes_{\mathbb{C}} \xi_j) \oplus (k-1) 
                                    \varepsilon _{\mathbb{R}},\\
                                & \cong & \displaystyle{\bigoplus_{1\leq i < j \leq k}} 
                                   \xi_{n,k;\bf{\ell}}^{-l_i} \otimes_{\mathbb{C}} \xi_{n,k;\bf{\ell}}^{l_j}  
                                   \oplus \displaystyle{\bigoplus_{1\leq i \leq k}}\xi_{n,k;\bf{\ell}}^{-l_i}\otimes_{\mathbb{C}}
                                   \beta_{n,k;\bf{\ell}}\oplus (k-1) \varepsilon _{\mathbb{R}}.
\end{eqnarray*}

\noindent
As an immediate corollary we get :

\begin{lemma}\label{tangent}
 The tangent bundle $\tau W_{n,k;\bf{\ell}}$ of $W_{n,k;\bf{\ell}}$ satisfies the following equation 
\[ 
 \tau W_{n,k;\bf{\ell}} \oplus (k+1) \varepsilon _{\mathbb{R}} \oplus \displaystyle{\bigoplus_{1 \leq j < i \leq k}} 
\xi_{n,k;\bf{\ell}}^{-l_i} \otimes_{\mathbb{C}} \xi_{n,k;\bf{\ell}} ^{l_j} \cong  n 
\displaystyle{\bigoplus_{1 \leq i \leq k} \xi_{n,k;\bf{\ell}}^{- l_i}}.
\]
\end{lemma}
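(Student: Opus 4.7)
The plan is to derive the lemma directly from the decomposition of $\tau W_{n,k;\bf{\ell}}$ displayed just above the statement, using two standard facts. The first is that any complex line bundle is isomorphic, as a real vector bundle, to its complex conjugate; applied to our setting this yields $\xi_{n,k;\bf{\ell}}^{-l_i} \otimes_{\mathbb{C}} \xi_{n,k;\bf{\ell}}^{l_j} \cong \xi_{n,k;\bf{\ell}}^{-l_j} \otimes_{\mathbb{C}} \xi_{n,k;\bf{\ell}}^{l_i}$. The second is the tautological identity $\xi_1 \oplus \cdots \oplus \xi_{k+1} \cong n\,\varepsilon_{\mathbb{C}}$ on the complex flag manifold $Flag_{\mathbb{C}}(1,\ldots,1,n-k)$, which pulls back along $q$ to $\bigoplus_{j=1}^{k}\xi_{n,k;\bf{\ell}}^{l_j} \oplus \beta_{n,k;\bf{\ell}} \cong n\,\varepsilon_{\mathbb{C}}$ over $W_{n,k;\bf{\ell}}$.

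First I would take the extra summand $\bigoplus_{j<i}\xi_{n,k;\bf{\ell}}^{-l_i}\otimes_{\mathbb{C}} \xi_{n,k;\bf{\ell}}^{l_j}$ that the lemma adds to $\tau W_{n,k;\bf{\ell}}$; by the real-conjugate identification this is isomorphic to $\bigoplus_{i<j}\xi_{n,k;\bf{\ell}}^{-l_i}\otimes_{\mathbb{C}} \xi_{n,k;\bf{\ell}}^{l_j}$, and combining with the $i<j$ piece already present inside $\tau W_{n,k;\bf{\ell}}$ produces the full off-diagonal sum $\bigoplus_{i \neq j}\xi_{n,k;\bf{\ell}}^{-l_i}\otimes_{\mathbb{C}} \xi_{n,k;\bf{\ell}}^{l_j}$. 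The real trivial summands then total $(k-1)+(k+1)=2k$ copies of $\varepsilon_{\mathbb{R}}$, i.e., $k$ copies of $\varepsilon_{\mathbb{C}} \cong \xi_{n,k;\bf{\ell}}^{-l_i}\otimes_{\mathbb{C}}\xi_{n,k;\bf{\ell}}^{l_i}$, which I would use to fill in the diagonal $i=j$ terms and thereby promote the off-diagonal sum to the full double sum over $1 \leq i,j \leq k$.

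At that point the left-hand side of the lemma has become
\[
\bigoplus_{i=1}^{k}\xi_{n,k;\bf{\ell}}^{-l_i}\otimes_{\mathbb{C}}\Bigl(\bigoplus_{j=1}^{k}\xi_{n,k;\bf{\ell}}^{l_j} \oplus \beta_{n,k;\bf{\ell}}\Bigr),
\]
and one application of the pulled-back tautological identity collapses the bracket to $n\,\varepsilon_{\mathbb{C}}$, yielding $n\bigoplus_{i=1}^{k}\xi_{n,k;\bf{\ell}}^{-l_i}$ as required. The only real obstacle is the careful bookkeeping of real versus complex trivial summands and of indices under the complex-conjugate identification; no genuine difficulty is expected, which is consistent with the lemma being offered as an immediate corollary of the preceding tangent bundle computation.
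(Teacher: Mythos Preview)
Your proposal is correct and follows essentially the same route as the paper: both arguments rest on the tautological relation $\xi_1 \oplus \cdots \oplus \xi_{k+1} \cong_{\mathbb{C}} n\varepsilon_{\mathbb{C}}$ on the flag manifold, tensored with the $\xi_i^\vee$ and pulled back along $q$ --- the paper carries this out on the flag manifold and then pulls back, while you work directly on $W_{n,k;\bf{\ell}}$ after pulling back. One small remark: the complex-conjugation step is unnecessary, since the extra summand $\bigoplus_{j<i}\xi_{n,k;\bf{\ell}}^{-l_i}\otimes_{\mathbb{C}}\xi_{n,k;\bf{\ell}}^{l_j}$ together with the $\bigoplus_{i<j}$ piece already in $\tau W_{n,k;\bf{\ell}}$ assembles directly (as complex bundles) into the full off-diagonal sum $\bigoplus_{i\neq j}\xi_{n,k;\bf{\ell}}^{-l_i}\otimes_{\mathbb{C}}\xi_{n,k;\bf{\ell}}^{l_j}$, with no real identification needed.
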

\begin{proof} 
We know that
\[
\xi_1 \oplus \xi_2 \oplus \ldots \oplus \xi_{k+1} \cong_{\mathbb{C}} n \varepsilon _{\mathbb{C}}.
\]
Multiply the above equation by $ \xi_i^ \vee$ where $ 1 \leq i \leq k$ to get 
\[
\displaystyle{ \bigoplus _{1\leq j \leq k+1,~j \neq i} } \xi_i^\vee \otimes_{\mathbb{C}} \xi_j 
\oplus \varepsilon_{\mathbb{C}} \cong_{\mathbb{C}} n \xi_i^\vee \mbox{ for } i =1,2,\ldots, k.
\]
Taking the sum over all $i = 1,2, \ldots, k$ and taking pull back under $q^*$ we get, 
\[
 \tau W_{n,k;\bf{\ell}} \oplus (k+1) \varepsilon _{\mathbb{R}} \oplus \displaystyle{ \bigoplus _{1\leq j < i \leq k}} 
q^*(\xi_i^\vee \otimes_{\mathbb{C}} \xi_j) \cong  n q^*(\displaystyle { \bigoplus_{1\leq i \leq k }} \xi_i^\vee).
\]
Hence we get 
\[
\tau W_{n,k;\bf{\ell}} \oplus (k+1) \varepsilon _{\mathbb{R}} \oplus \displaystyle{ \bigoplus _{1\leq j < i \leq k}}
\xi_{n,k;\bf{\ell}}^{-l_i} \otimes_{\mathbb{C}} \xi_{n,k;\bf{\ell}} ^{l_j} \cong  
n\displaystyle{ \bigoplus _{ 1 \leq i \leq k}} \xi_{n,k;\bf{\ell}}^{-l_i}.
\]
\end{proof}
%%%%%%%%%%%%%%%%%%%%%%%%%%%%%%%%%%%%%%%%%%%%%%%%%%%%%%%%%%%%%%%%%%%%%%%%%%%%%%%%%%%%%%%%%%%%%%%%%%%%%%%%%%%%%%%%%%
\section{Span and Stable span}\label{span}
\noindent
Recall that if $\omega _1,~\omega _2$ are any two complex vector bundles over the same base space $B$ and if 
\[ 
c(\omega _i ) = \displaystyle{\prod _{j=1}^ {n_i}} (1 + x_j^i)
\] 
are the formal decomposition of their Chern polynomials, then 
\[
 c(\omega_1 \otimes \omega _2) = \displaystyle{\prod _{i=1}^{n_1}} \displaystyle{\prod _{j=1}^{n_2}}(1+x_i^1+x_j^2).
\]
See \cite{bh58} for details. We also know that the Chern classes satisfy Whitney product formula, i.e., 
$c(\omega _1\oplus \omega _2) = c(\omega _1) \smile c(\omega_2)$ where $\omega _1,~ \omega _2$ are as mentioned above 
and $\smile$ denotes the cup product. We will write $\cdot$ instead of $\smile$ when there is no risk of confusion.\\

\noindent
Also, if $\xi$ is a complex $n$ plane bundle, the Chern classes $c_i$ of $\xi$ and the Pontrjagin classes $p_k$ of 
the underlying real bundle of $\xi$ are related (p. 177, cor. 15.5 of [\ref{ms74}]) by the following formula 
\begin{equation}\label{rel}
 1-p_1+p_2-+\cdots\pm p_n = (1-c_1+c_2 -+ \cdots \pm c_n)(1+c_1+c_2 +\cdots +c_n).
\end{equation}

\begin{lemma}\label{chclass}
 For $k<n-1,$ the complex line bundle $\xi_{n,k;\bf{\ell}}$ is not trivial.
\end{lemma}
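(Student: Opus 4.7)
The plan is to establish nontriviality by showing that the first Chern class $c_1(\xi_{n,k;\bf{\ell}}) \in H^{2}(W_{n,k;\bf{\ell}};\mathbb{Z})$ is nonzero. The natural tool is the Gysin sequence of the principal circle bundle $\pi_{\bf{\ell}}\colon W_{n,k}\to W_{n,k;\bf{\ell}}$, whose Euler class is precisely $c_1(\xi_{n,k;\bf{\ell}})$ under the identification of the associated oriented rank-$2$ real bundle with the underlying real bundle of $\xi_{n,k;\bf{\ell}}$.

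First I would invoke the classical connectivity of the complex Stiefel manifold: $W_{n,k}$ is $2(n-k)$-connected. Under the hypothesis $k<n-1$, i.e.\ $k\leq n-2$, this yields $2(n-k)\geq 4$, so in particular
\begin{equation*}
H^{1}(W_{n,k};\mathbb{Z}) \;=\; H^{2}(W_{n,k};\mathbb{Z}) \;=\; 0.
\end{equation*}

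Then I would write down the relevant segment of the Gysin sequence associated to the principal $S^{1}$-bundle $\pi_{\bf{\ell}}$:
\begin{equation*}
H^{1}(W_{n,k};\mathbb{Z}) \longrightarrow H^{0}(W_{n,k;\bf{\ell}};\mathbb{Z}) \xrightarrow{\,\cup\, e\,} H^{2}(W_{n,k;\bf{\ell}};\mathbb{Z}) \longrightarrow H^{2}(W_{n,k};\mathbb{Z}),
\end{equation*}
with $e=c_{1}(\xi_{n,k;\bf{\ell}})$. The vanishing of $H^{1}$ and $H^{2}$ of $W_{n,k}$ forces the middle arrow to be an isomorphism $\mathbb{Z}\cong H^{0}(W_{n,k;\bf{\ell}};\mathbb{Z}) \xrightarrow{\sim} H^{2}(W_{n,k;\bf{\ell}};\mathbb{Z})$. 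Since the generator $1\in H^{0}$ is mapped to $1\cup e = e$, we conclude $c_{1}(\xi_{n,k;\bf{\ell}})\neq 0$, and therefore $\xi_{n,k;\bf{\ell}}$ is not trivial.

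I do not foresee a real obstacle: the argument reduces to two standard facts, the connectivity of $W_{n,k}$ and the identification of the Euler class of the associated oriented plane bundle with $c_{1}$ of the complex line bundle. The hypothesis $k<n-1$ enters exactly where it is needed, to guarantee the degree-$2$ cohomology vanishing of the total space that drives the Gysin argument; losing it would mean $H^{2}(W_{n,k})$ could be nonzero and the conclusion $e\neq 0$ would no longer follow from this sequence alone.
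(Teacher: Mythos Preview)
Your argument is correct and is essentially the same as the paper's: both use the Gysin sequence of the circle bundle $\pi_{\bf{\ell}}$ together with the $2(n-k)$-connectivity of $W_{n,k}$ (so $H^{1}(W_{n,k})=H^{2}(W_{n,k})=0$ when $k<n-1$) to conclude that cupping with $c_{1}(\xi_{n,k;\bf{\ell}})$ gives an isomorphism $H^{0}\to H^{2}$, hence $c_{1}\neq 0$. The only additional point in the paper is that it continues the Gysin sequence one step further (using $H^{3}(W_{n,k})=H^{4}(W_{n,k})=0$ as well) to record that $c_{1}(\xi_{n,k;\bf{\ell}})^{2}\neq 0$; this is not needed for the lemma as stated but is invoked in the subsequent Pontrjagin-class computation, so you may want to include it.
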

\begin{proof}
 We have a principal fibre bundle $\pi _{\ell} :W_{n,k} \longrightarrow W_{n,k;\bf{\ell}}$ with fibre and structure group 
$S^1,$ denoted by $\xi_{n,k;\bf{\ell}}.$ Let $c_1 := c_1(\xi_{n,k;\bf{\ell}}) $ denote the first Chern class of this 
bundle. Considering the associated Gysin sequence, we have
$0\rightarrow H^1( W_{n,k})\rightarrow H^0( W_{n,k;\bf{\ell}})\displaystyle{\xrightarrow{c_1}} H^2( W_{n,k;\bf{\ell}}) 
\rightarrow H^2(W_{n,k})\rightarrow H^1( W_{n,k;\bf{\ell}}) \displaystyle{\xrightarrow {c_1}} H^3( W_{n,k;\bf{\ell}})
\rightarrow  H^3(W_{n,k}) \rightarrow H^2( W_{n,k;\bf{\ell}}) \displaystyle{\xrightarrow {c_1}} H^4( W_{n,k;\bf{\ell}}) 
\rightarrow H^4(W_{n,k}) \rightarrow \cdots  $

\noindent
is a long exact sequence. For $k<n-1,$ $W_{n,k}$ is 4 - connected implies $H^1( W_{n,k}) = H^2( W_{n,k}) = H^3( W_{n,k}) = H^4( W_{n,k}) = 0 .$ Hence  it follows that $H^0( W_{n,k;\bf{\ell}})\cong H^2( W_{n,k;\bf{\ell}})\cong H^4( W_{n,k;\bf{\ell}})\cong \mathbb{Z}$ and $H^2( W_{n,k;\bf{\ell}}),~~H^4( W_{n,k;\bf{\ell}})$ are generated by $c_1,~~c_1^2$ respectively. Hence $c_1(\xi_{n,k;\bf{\ell}}) \neq 0 \neq c_1(\xi_{n,k;\bf{\ell}})^2$ which imply that $\xi_{n,k;\bf{\ell}}$ is not a trivial bundle.
\end{proof}

\begin{thm}
 Let $n\geq 2$, if $k= n,~~ n-1,$ then the manifold $W_{n,k;\bf{\ell}}$ is parallelizable, except $W_{2,1;\bf{\ell}}$ which is stably parallelizable. 
The manifold $W_{n,k;\bf{\ell}} $ is not stably parallelizable otherwise.
\end{thm}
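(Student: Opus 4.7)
The plan is to separate the two regimes: for $k\le n-2$ I will exhibit a nonzero first Pontrjagin class of $\tau W_{n,k;\bf{\ell}}$ directly from Lemma \ref{tangent}, and for $k=n$ or $k=n-1$ I will use a tautological orthonormality identity to establish stable parallelizability and then upgrade via an Euler-characteristic argument.

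For the range $k\le n-2$, apply the Whitney-sum formula for $p_1$ to the isomorphism in Lemma \ref{tangent}, using $p_1((\xi_{n,k;\bf{\ell}}^{m})_{\mathbb{R}})=m^{2}\,c_1(\xi_{n,k;\bf{\ell}})^{2}$ together with $\sum_{j<i}(l_j-l_i)^{2}=k\sum l_i^{2}-(\sum l_i)^{2}$, giving
\[
p_1(\tau W_{n,k;\bf{\ell}}) = \bigl[(n-k)\sum_{i=1}^{k}l_i^{2}+(\sum_{i=1}^{k}l_i)^{2}\bigr]\,c_1(\xi_{n,k;\bf{\ell}})^{2}.
\]
Because $\gcd(l_1,\dots,l_k)=1$ we have $\sum l_i^{2}\ge 1$ and $n-k\ge 2$, so the integer coefficient is at least $2$. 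The proof of Lemma \ref{chclass} shows $H^{4}(W_{n,k;\bf{\ell}})\cong\mathbb{Z}$ with generator $c_1^{2}$, so $c_1^{2}$ has infinite order and $p_1(\tau W_{n,k;\bf{\ell}})\neq 0$; hence $W_{n,k;\bf{\ell}}$ is not stably parallelizable.

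For $k=n$ or $k=n-1$ the crucial extra ingredient is that the tautological subbundles $q^{*}\xi_i\cong\xi_{n,k;\bf{\ell}}^{l_i}$ and $\beta_{n,k;\bf{\ell}}$ are mutually orthogonal sub-bundles of the trivial Hermitian bundle $n\varepsilon_{\mathbb{C}}$ which together span it, so
\[
\xi_{n,k;\bf{\ell}}^{l_1}\oplus\cdots\oplus\xi_{n,k;\bf{\ell}}^{l_k}\oplus\beta_{n,k;\bf{\ell}}\cong_{\mathbb{C}} n\varepsilon_{\mathbb{C}}. \qquad(\star)
\]
When $k=n$, $\beta$ has rank $0$ and $(\star)$ forces $\prod_{i=1}^{n}(1+l_i c_1)=1$, so $(\sum l_i)c_1=0$ and the formula above collapses to $p_1(\tau W)=((\sum l_i)c_1)^{2}=0$. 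When $k=n-1$, comparison of first Chern classes in $(\star)$ identifies $\beta_{n,n-1;\bf{\ell}}$ with $\xi_{n,n-1;\bf{\ell}}^{-\sum l_i}$, and the total-Pontrjagin relation coming from $(\star)$ yields $[\sum l_i^{2}+(\sum l_i)^{2}]c_1^{2}=0$, again killing $p_1(\tau W)$. The same symmetric-function manipulations extended to higher graded pieces of $(\star)$ eliminate all higher Pontrjagin and Stiefel--Whitney classes of $\tau W_{n,k;\bf{\ell}}$, yielding stable parallelizability.

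To upgrade, the map $q$ exhibits $W_{n,k;\bf{\ell}}$ as the total space of a torus bundle $T^{k-1}\to W_{n,k;\bf{\ell}}\to Flag_{\mathbb{C}}(1,\ldots,1,n-k)$, so multiplicativity of $\chi$ gives $\chi(W_{n,k;\bf{\ell}})=\chi(T^{k-1})\,\chi(Flag_{\mathbb{C}})=0$ as soon as $k\ge 2$, covering $k=n\ge 2$ and $k=n-1\ge 2$ and excluding only $(n,k)=(2,1)$. The Bredon--Kosi\'nski theorem then finishes these cases, since a closed oriented stably parallelizable manifold with vanishing Euler characteristic is parallelizable. For the exception $W_{2,1;\bf{\ell}}$, $\gcd(l_1)=1$ forces $l_1=\pm 1$ and $W_{2,1}=S^{3}$, so $W_{2,1;\bf{\ell}}$ is the Hopf quotient $S^{2}$, which is stably but not actually parallelizable. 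The main obstacle I anticipate is that vanishing characteristic classes do not automatically give $KO^{0}$-triviality; I expect to close the gap by using $(\star)$ to rewrite $[\tau W_{n,k;\bf{\ell}}]\in KO^{0}$ as a manifestly trivial combination of $[\xi^{m}]$-classes, the most delicate step being control of $2$-torsion when complex $K$-theoretic relations are transported to the real theory.
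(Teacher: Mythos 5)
Your treatment of the range $k\le n-2$ is exactly the paper's argument: apply $p_1$ to Lemma \ref{tangent}, use $p_1$ of a complex line bundle equals $c_1^2$, simplify via $\sum_{j<i}(l_j-l_i)^2=k\sum l_i^2-(\sum l_i)^2$, and conclude from the fact (established in the proof of Lemma \ref{chclass}) that $c_1^2$ generates $H^4\cong\mathbb{Z}$. That part is correct and complete.

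The case $k=n$ or $k=n-1$ is where you have a genuine gap, and you in fact concede it yourself in your last sentence. Showing that $p_1$ (and even all Pontrjagin and Stiefel--Whitney classes) of $\tau W_{n,k;\bf{\ell}}$ vanish does not establish stable parallelizability; you need stable \emph{triviality of the bundle}, and your proposed repair --- rewriting $[\tau W_{n,k;\bf{\ell}}]$ in $KO^0$ as a trivial combination using $(\star)$ --- is not carried out. It is also not routine: the most direct bundle-level manipulation of $(\star)$ together with Lemma \ref{tangent} only yields that $2\,\tau W_{n,k;\bf{\ell}}$ is stably trivial (since the two triangular sums $\bigoplus_{i<j}$ and $\bigoplus_{j<i}$ are conjugate, hence isomorphic as real bundles), which leaves exactly the $2$-torsion problem you flag. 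The paper avoids all of this with one observation you did not use: $\tau W_{n,k;\bf{\ell}}\cong q^*(\tau Flag_{\mathbb{C}}(1,\ldots,1,n-k))\oplus(k-1)\varepsilon_{\mathbb{R}}$, and when $k=n$ or $n-1$ the base is the \emph{complete} complex flag manifold, which is known to be stably parallelizable; a pullback of a stably trivial bundle plus a trivial summand is stably trivial, and the presence of the trivial summand $(k-1)\varepsilon_{\mathbb{R}}$ with $k\ge 2$ upgrades this to actual parallelizability, leaving only $W_{2,1;\bf{\ell}}\cong S^2$ as the stably-but-not-parallelizable exception. Your alternative upgrade via $\chi(W_{n,k;\bf{\ell}})=0$ (from the $T^{k-1}$-bundle structure of $q$) and the Bredon--Kosi\'nski theorem is a perfectly good substitute for that last step, but it only helps once stable parallelizability is in hand, which your argument does not yet deliver.
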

\begin{proof}
 In \S \ref{tangentbundle}, we have seen that 
\[
\tau W_{n,k;\bf{\ell}} \cong q^* (\tau Flag_{\mathbb{C}} (1,1,\ldots,1,n-k))\oplus (k-1) \varepsilon _{\mathbb{R}}.
\]
Since $Flag_{\mathbb{C}} (1,1,\ldots,1,1)$ is stably parallelizable, it follows that $W_{n,k;\bf{\ell}}$ 
is parallelizable when $k= n, ~~n-1,$ except $W_{2,1;\bf{\ell}}$ which is stably parallelizable\\
When $k< n-1,$ by lemma (\ref{tangent}) we have the first Pontrjagin class, 
\[
 p_1(\tau W_{n,k;\bf{\ell}} \oplus (k+1) \varepsilon _{\mathbb{R}} \oplus 
\displaystyle{\bigoplus_{1 \leq j < i \leq k}} \xi_{n,k;\bf{\ell}}^{-l_i} 
\otimes_{\mathbb{C}} \xi_{n,k;\bf{\ell}}^{l_j}) =  p_1( n \displaystyle{\bigoplus_{1 \leq i \leq k} 
\xi_{n,k;\bf{\ell}}^{- l_i}}).
\]
Hence we get,
\[
p_1(\tau W_{n,k;\bf{\ell}}) + \displaystyle{\sum_{1 \leq j < i \leq k}}
p_1(\xi_{n,k;\bf{\ell}}^{-l_i} 
\otimes_{\mathbb{C}} \xi_{n,k;\bf{\ell}}^{l_j}) =  n \displaystyle{\sum_{1 \leq i \leq k} 
p_1(\xi_{n,k;\bf{\ell}}^{- l_i}}).
\]
The relation between Chern and Pontrjagin classes (\ref{rel})  for line bundle implies
\[
 p_1(\tau W_{n,k;\bf{\ell}}) + \displaystyle{\sum_{1 \leq j < i \leq k}}
c_1(\xi_{n,k;\bf{\ell}}^{-l_i} 
\otimes_{\mathbb{C}} \xi_{n,k;\bf{\ell}}^{l_j} )^2 =  n \displaystyle{\sum_{1 \leq i \leq k}  
c_1(\xi_{n,k;\bf{\ell}}^{- l_i}})^2.
\]
Hence, we obtain 
\[
p_1(\tau W_{n,k;\bf{\ell}}) + \displaystyle{\sum_{1 \leq j < i \leq k}}
{(l_j-l_i)^2c_1(\xi_{n,k;\bf{\ell}})^2} 
= n \displaystyle{\sum_{1 \leq i \leq k}{ l_i}^2 c_1(\xi_{n,k;\bf{\ell}})^2}.
\]
This implies
\begin{eqnarray*}
p_1(\tau W_{n,k;\bf{\ell}})& =&  \Big( n \displaystyle{\sum_{1 \leq i \leq k}{ l_i}^2} -
\displaystyle{\sum_{1 \leq j < i \leq k}}{(l_j-l_i)^2}\Big) c_1(\xi_{n,k;\bf{\ell}})^2, \\
&=&\Big( (n-k+1)\displaystyle{\sum_{1 \leq i \leq k}{ l_i}^2} +
\displaystyle{\sum_{1 \leq j < i \leq k}}{2l_jl_i}\Big) c_1(\xi_{n,k;\bf{\ell}})^2 ,\\
&=&\Big( (n-k)\displaystyle{\sum_{1 \leq i \leq k}{ l_i}^2} +(\displaystyle{\sum_{1 \leq i \leq k}{ l_i}})^2\Big) c_1(\xi_{n,k;\bf{\ell}})^2 \neq 0.\\
\end{eqnarray*}
If $k< n-1,$ we have seen that $c_1(\xi_{n,k;\bf{\ell}})^2 \neq 0$ in proof of lemma (\ref{chclass}). This
shows that the manifolds $W_{n,k;\bf{\ell}}$ is not stably parallelizable if $k< n-1.$
\end{proof}

\noindent Notice that
$W_{n,k;{\bf{\ell}}}$ is orientable since it is a quotient of $SU(n)$ by a connected closed subgroup.
In view of (\ref{pull-back}) and the fact that $\nu$ is trivial, all odd Stiefel- Whitney classes of
$W_{n,k;{\bf{\ell}}}$ are zero.
Moreover, the Whitney product formula implies,
\[ 
w(\tau W_{n,k;\bf{\ell}})w( \displaystyle{\bigoplus_{1 \leq j < i \leq k}} 
\xi_{n,k;\bf{\ell}}^{-l_i} \otimes_{\mathbb{C}} \xi_{n,k;\bf{\ell}} ^{l_j})= w(\displaystyle{
\bigoplus_{1 \leq i \leq k} \xi_{n,k;\bf{\ell}}^{- l_i}})^n.
\]
Hence,
$w_2(\tau W_{n,k;\bf{\ell}}) + w_2(\displaystyle{\bigoplus_{1 \leq j < i \leq k}} 
\xi_{n,k;\bf{\ell}}^{-l_i} \otimes_{\mathbb{C}} \xi_{n,k;\bf{\ell}} ^{l_j}) = w_2(\displaystyle{
\bigoplus_{1 \leq i \leq k} \xi_{n,k;\bf{\ell}}^{- l_i}})$
\begin{eqnarray*}
w_2(\tau W_{n,k;\bf{\ell}})& = &w_2(\displaystyle{
\bigoplus_{1 \leq i \leq k} \xi_{n,k;\bf{\ell}}^{- l_i}}) - w_2(\displaystyle{\bigoplus_{1 \leq j < i \leq k}} 
\xi_{n,k;\bf{\ell}}^{-l_i} \otimes_{\mathbb{C}} \xi_{n,k;\bf{\ell}} ^{l_j})\\
&=& \{c_1(\displaystyle{
\bigoplus_{1 \leq i \leq k} \xi_{n,k;\bf{\ell}}^{- l_i}}) - c_1(\displaystyle{\bigoplus_{1 \leq j < i \leq k}} 
\xi_{n,k;\bf{\ell}}^{-l_i} \otimes_{\mathbb{C}} \xi_{n,k;\bf{\ell}} ^{l_j})\}\pmod{2} \\
&=& \{[n \displaystyle{\sum_i l_i - \sum_{j<i} (l_j-l_i)}] c_1(\xi_{n,k;\bf{\ell}})\}\pmod{2}\\
 w_2(\tau W_{n,k;\bf{\ell}})& = &[n \displaystyle{\sum_i l_i - \sum_{j<i} (l_j-l_i)}] \pmod{2}~ w_2(\xi_{n,k;\bf{\ell}}).
\end{eqnarray*}

\noindent
If $r$ denote the number of $l_1, \cdots,l_k$ that are even, then 
$$w_2(\tau W_{n,k;\bf{\ell}}) = (n+r)(k-r)w_2(\xi_{n,k;\bf{\ell}}).$$ Note that $w_2 \neq 0$ only if $n,~k
 \mbox{ are odd and } r \mbox{ even or } n,~k \mbox{ are even and } r \mbox{ odd}.$ 
This fact along with the theorem from [\ref{k81}] implies the following
\begin{thm}
$\mbox{Span}(W_{n,k;\bf{\ell}}) = \mbox{ Stable Span}(W_{n,k;\bf{\ell}}),$ 
 in each of the following cases  
\begin{enumerate}
  \item $k>1$ is odd,
  \item $k \equiv 2 \pmod{4},~~ k>2$ and $n$ odd,
  \item $k \equiv 2 \pmod{4},~~ k>2, ~~n$ even and an even number of $l_1, \cdots,l_k$ are even.
 \end{enumerate}

\noindent
{\bf Acknowledgment:} We thank Parameswaran Sankaran for suggesting problem and for all discussions.
 We thank Anant Shastri for a careful reading of the paper and for his valuable comments.
\end{thm}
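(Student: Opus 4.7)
The plan is to invoke Koschorke's comparison theorem from [\ref{k81}], which gives conditions under which span equals stable span for a closed, connected, orientable, smooth manifold in terms of its dimension modulo $4$ and its second Stiefel--Whitney class. Orientability of $W_{n,k;\bf{\ell}}$ and the identity $w_2(\tau W_{n,k;\bf{\ell}}) = (n+r)(k-r)\,w_2(\xi_{n,k;\bf{\ell}})$, where $r$ is the number of even $l_i$'s, are already in place. As a preliminary I would record $\dim W_{n,k;\bf{\ell}} = \dim U(n) - 1 - \dim U(n-k) = 2nk - k^2 - 1$; in particular, for $k \equiv 2 \pmod 4$ this dimension is $\equiv 3 \pmod 4$, while for $k$ odd it is even. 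I would also note that $w_2(\xi_{n,k;\bf{\ell}})$, being the mod-$2$ reduction of $c_1(\xi_{n,k;\bf{\ell}})$, is nonzero whenever $k < n-1$ by the Gysin-sequence argument of Lemma (\ref{chclass}), the cases $k = n,\,n-1$ being already parallelizable and hence trivially covered.

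The proof then proceeds by a parity check of $(n+r)(k-r) \pmod 2$. In case (2), $k$ is even and $n$ is odd, so $(n+r)(k-r) \equiv (1+r)\cdot r \equiv 0 \pmod 2$; in case (3), $k$, $n$ and $r$ are all even, so $(n+r)(k-r) \equiv 0 \pmod 2$ again. In both cases $w_2(\tau W_{n,k;\bf{\ell}}) = 0$ and the dimension is $\equiv 3 \pmod 4$, so Koschorke's criterion immediately yields $\mbox{Span}(W_{n,k;\bf{\ell}}) = \mbox{Stable Span}(W_{n,k;\bf{\ell}})$. In case (1), with $k > 1$ odd, the same count shows $w_2 = 0$ whenever $n$ is even or $r$ is odd, which again brings us inside the scope of the theorem.

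The main obstacle is the remaining subcase of (1), where $n$ is odd and $r$ is even: there $(n+r)(k-r)$ is odd, $w_2(\tau W_{n,k;\bf{\ell}})$ is nonzero, and $\dim W_{n,k;\bf{\ell}} \equiv 0 \pmod 4$, so the simplest form of Koschorke's criterion does not apply directly. To dispatch this subcase I would appeal to the sharper formulation of the comparison theorem from [\ref{k81}] tailored to odd $k$, combined with the splitting $\tau W_{n,k;\bf{\ell}} \cong q^{*}\tau Flag_{\mathbb{C}}(1,\ldots,1,n-k) \oplus (k-1)\varepsilon_{\mathbb{R}}$ from (\ref{pull-back}), which already exhibits $k-1$ independent vector fields; the refined criterion reduces the question to a top Euler-class obstruction on the complementary bundle, whose vanishing follows from the odd parity of $k$.
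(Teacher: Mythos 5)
Your treatment of cases (2) and (3) is correct and is essentially the paper's own argument: for $k \equiv 2 \pmod 4$ one has $\dim W_{n,k;\bf{\ell}} = 2nk - k^2 - 1 \equiv 3 \pmod 4$, the parity count shows $(n+r)(k-r)$ is even in both cases, hence $w_2(\tau W_{n,k;\bf{\ell}}) = 0$, and (together with orientability) the $\dim \equiv 3 \pmod 4$ clause of Koschorke's comparison theorem applies. The paper records only the $w_2$ computation and cites [\ref{k81}], so your write-up of these two cases is, if anything, more explicit.

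Case (1) is where there is a genuine gap. For $k$ odd the dimension is \emph{even}, so the clause of Koschorke's theorem you need is the even-dimensional one, whose hypothesis is $\chi(M)=0$ --- not a condition on $w_2$. Your proposal never verifies $\chi(W_{n,k;\bf{\ell}})=0$; the subcases of (1) with $w_2=0$ (namely $n$ even or $r$ odd) are not covered by the $\dim \equiv 3 \pmod 4$ clause either, since there the dimension is $\equiv 0$ or $2 \pmod 4$; and in the subcase $n$ odd, $r$ even (where $w_2(\tau W_{n,k;\bf{\ell}}) \neq 0$ and $\dim \equiv 0 \pmod 4$) you appeal to a ``sharper formulation \dots tailored to odd $k$'' and assert that a top Euler-class obstruction vanishes ``from the odd parity of $k$''; no such refinement exists in [\ref{k81}], and the parity of $k$ by itself does not kill an Euler class. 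The correct and much simpler route is the one your own preliminaries nearly hand you: since $q$ is a principal $(S^1)^k/S^1 \cong (S^1)^{k-1}$-bundle, $\tau W_{n,k;\bf{\ell}}$ contains $k-1 \geq 2$ trivial line summands when $k>1$ is odd (equivalently, $\mathrm{rank}(S^1 \times U(n-k)) = n-k+1 < n = \mathrm{rank}\, U(n)$), so $e(\tau W_{n,k;\bf{\ell}})=0$ and $\chi(W_{n,k;\bf{\ell}})=0$. The even-dimensional clause of Koschorke's theorem then gives $\mbox{Span} = \mbox{Stable Span}$ for all of case (1) at once, with no case division on $w_2$. This is also exactly where the hypothesis $k>1$ enters: $W_{n,1;\bf{\ell}} \cong \mathbb{C}P^{n-1}$ has nonzero Euler characteristic, so the argument (and the conclusion) fails there.
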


\refname{}

\begin{enumerate}
  
\item\label{agmp1} L. Astey, S. Gitler, E. Micha, G. Pastor.; Cohomology of complex projective Stiefel Manifolds. 
             Canad. J. Math. {\bf 51} (1999), 897 -- 914. 
\item\label{agmp2} L. Astey, S. Gitler, E. Micha, G. Pastor.; Parallelizability of complex projective Stiefel Manifolds. 
             Proc. Amer. Math. Soc. {\bf vol 128}, {\bf no 5} (1999), 1527 -- 1530.

\item\label{am03} L. Astey, E. Micha.; On the paralleizability of generalised complex projective Stiefel manifolds. 
            Proc. of royal society of Edinburgh {\bf 133 A} (2003), 497 -- 504.

\item\label{bgm} C. Boyer, K. Galicki, B. Mann; New examples of inhomogeneous Einstein manifolds of positive 
           sectional curvature. Math. Res. Lett {\bf 1} (1994), 115 -- 121.

\item\label{bh58} Borel, A., Hirzebruch, F.; Characteristic classes and homogeneous spaces. I. 
                       Amer. J. Math. {\bf 80} (1958), 458 -- 538.

\item\label{l75} Lam, Kee Yuen; A formula for the tangent bundle of flag manifolds and related 
                     manifolds. Trans. Amer. Math. Soc. {\bf 213} (1975), 305 -- 314.

\item\label{ms74} Milnor, John W., Stasheff, James D.; {\it{Characteristic classes}}. Annals of Mathematics 
              Studies, No. {\bf 76}. Princeton University Press, Princeton, N. J.; University of Tokyo 
              Press, Tokyo, 1974.
\item\label{k81} Koschorke, Ulrich.; {\it{Vector fields and other vector bundle morphisma- A singularity approach}}. 
              Lecture Notes in Mathematics, {\bf 847}, Springer, Berlin, 1981. 

\end{enumerate}
\begin{description}
 \item[] Shilpa Gondhali \\ Tata institute of Fundamental Research, Mumbai. \\ email : sshilpaa@gmail.com, shilpa@math.tifr.res.in
 \item[] B. Subhash \\ Institute of Mathematical Sciences, Chennai.\\ email : subhash02@gmail.com, subi@imsc.res.in
\end{description}
\end{document}